\theoremstyle{plain}
\newtheorem{Theorem}{Thm}[section]
\newtheorem{Lem}[Theorem]{Lemma}
\newtheorem{Cor}[Theorem]{Corollary}
\newtheorem*{Thm*}{Theorem}
\newtheorem*{Lem*}{Lemma}
\newtheorem*{Rem*}{Remark}
\theoremstyle{definition}
\newtheorem{Exm}[Theorem]{Example}
\newtheorem{theorem}{Theorem}
\theoremstyle{definition}
\newcommand{\R}{\mathbb{R}}
\newcommand{\N}{\mathbb{N}}
\newcommand{\Z}{\mathbb{Z}}
\renewcommand\P{\mathbb{P}}
\newcommand{\C}{\mathbb{C}}
\DeclareMathOperator{\PGL}{PGL}
\DeclareMathOperator{\GL}{GL}
\DeclareMathOperator{\Aut}{Aut}
\DeclareMathOperator{\conv}{conv}
\DeclareMathOperator{\diag}{diag}
\renewcommand{\rho}{\varrho}
\renewcommand{\#}[1]{|#1|}
\renewcommand{\>}{\rangle}
\title[Classification of toric projective varieties]{Classification of
  toric projective varieties up to projective automorphisms}
\author{Friedrich Knop} \address{Friedrich Knop: Department
  Mathematik, FAU Erlangen-N\"urnberg, Cauerstra\ss e 11, 91058
  Erlangen, Germany} \email{friedrich.knop@fau.de}
\author{Rainer Sinn} \address{Rainer Sinn: Fachbereich Mathematik und
  Informatik, FU Berlin, Arnimallee 2, 14195 Berlin, Germany}
\email{rsinn@zedat.fu-berlin.de}
\subjclass[2010]{Primary: 14M25, 52B20}
\begin{document}

\begin{abstract}
  Toric subvarieties of projective space are classified up to
  projective automorphisms.
\end{abstract}

\maketitle

\section{The result}

\noindent A closed subvariety $X\subseteq\P^N=\P^N(\C)$ will be called
\emph{toric} if it is the orbit closure of a subtorus
$T\subseteq\PGL(N+1)$. The aim of this note is to classify toric
subvarieties of $\P^N$ as abstract subvarieties, i.e., without $T$
being part of the structure. More precisely, we give a description of
all toric subvarieties up to projective equivalence where two
subvarieties $X,X'\subseteq\P^N$ will be called \emph{projectively
  equivalent} if there is $g\in\PGL(N+1)$ (possibly not commuting with
$T$) such that $g(X)=X'$.

Toric subvarieties can be constructed as follows: For any $d\in\N$, we
choose some non-empty subset $S\subseteq\Z^d$ with $\#S\le N+1$. Let
$S=\{m_0,\ldots,m_r\}$ (where $r\le N$, by assumption). Then
$X(S)=\overline{Tx}$ is a toric subvariety of $\P^N$ where $T$ is the image of
\begin{equation}\label{eq:hom}
(\C^*)^d\to\PGL(N+1):t\mapsto\diag(t^{m_0},\ldots,t^{m_r},1,\ldots,1)
\end{equation}
(with $N-r$ ones) and $x:=[1:\ldots:1:0:\ldots:0]$ (with $N-r$
zeros). Observe that $X(S)$ depends on the choice of an enumeration of
$S$ but clearly any two choices lead to projective equivalent
subvarieties.

We call a subset $S\subseteq\Z^d$ \emph{affinely generating}
if $\Z^d$ is generated as a group by the differences $m-m'$ with
$m,m'\in S$. Two subsets $S,S'\subseteq\Z^d$ are \emph{affinely
  equivalent} if $S'=\Phi(S)$ where $\Phi$ is an affine transformation
of the form $\Phi(m)=Am+b$ with $A\in \GL(d,\Z)$ and $b\in\Z^d$.

Our main result is the following:

\begin{theorem}\label{thm:main}

  For any $d,N\in\N$ with $d\le N$ the map $S\mapsto X(S)$ induces a
  bijection between

  \begin{itemize}

    \item affinely generating subsets $S\subseteq\Z^d$ with
    $\#S\le N+1$ up to affine equivalence and

  \item toric subvarieties $X\subseteq\P^N$ of dimension $d$ up to
    projective equivalence.
   
  \end{itemize}
  
\end{theorem}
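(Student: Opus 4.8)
The plan is to show that $S\mapsto X(S)$ descends to a well-defined map on affine-equivalence classes with values among $d$-dimensional toric subvarieties, and then to prove it surjective and injective; the injectivity step is where the real work lies.

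\emph{Step 1: the $S$--$X(S)$ dictionary and a normalisation lemma.} In $\PGL(N+1)$ the element $[\diag(t^{m_0},\dots,t^{m_r},1,\dots,1)]$ is trivial exactly when $t^{m_i-m_j}=1$ for all $i,j$ (and, if $r<N$, also $t^{m_i}=1$); so if $S$ is affinely generating then \eqref{eq:hom} has trivial kernel, $T\cong(\C^*)^d$, the stabiliser of $x$ is trivial, and $X(S)$ has dimension $d$, while $X(S)$ is toric by construction. For an arbitrary finite $S_0\subseteq\Z^e$ I record the lemma: $X(S_0)$ is projectively equivalent to $X(S)$ for some affinely generating $S\subseteq\Z^d$ with $\#S=\#S_0$ and $d=\dim X(S_0)$. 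Indeed, after a translation $0\in S_0$; let $\Lambda=\langle S_0\rangle\le\Z^e$ and write $S_0=\{M\bar m:\bar m\in S\}$ with $M=(f_1|\cdots|f_d)$ the matrix of a basis of $\Lambda$; then $t^{M\bar m}=s^{\bar m}$ with $s_k=t^{f_k}$, and $t\mapsto(t^{f_1},\dots,t^{f_d})$ is surjective $(\C^*)^e\to(\C^*)^d$ (its image is a connected closed subgroup of full dimension $d$), so the orbit defining $X(S_0)$ equals the one defining $X(S)$ after the relabelling $S_0\leftrightarrow S$, and $S$ affinely generates $\Z^d$. Finally $X(S+b)=X(S)$ and $X(AS)=X(S)$ for $A\in\GL(d,\Z)$ by the same substitution, and re-enumerating $S$ permutes coordinates; hence $S\mapsto X(S)$ is well defined on affine-equivalence classes.

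\emph{Step 2: normal form for a toric subvariety.} Let $Y\subseteq\P^r$ be non-degenerate and $T\subseteq\PGL(r+1)$ a subtorus with a dense orbit. The preimage $\tilde T$ of $T$ in $\GL(r+1)$ is connected; it is abelian, since $[\tilde T,\tilde T]$ is a connected subgroup of the centre of $\GL(r+1)$ consisting of matrices of determinant $1$, hence finite, hence trivial; and every element of $\tilde T$ is semisimple, since the unipotent part of a lift of $g\in T$ lies in the centre (being in the Zariski closure of a cyclic subgroup) and maps to the trivial unipotent part of $g$. So $\tilde T$ is a torus, and conjugating in $\GL(r+1)$ we make $\tilde T$, hence $T$, diagonal. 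Decomposing $\C^{r+1}=\bigoplus_\mu V_\mu$ into $\tilde T$-weight spaces and taking $v$ to represent a point of the dense orbit, linear independence of distinct characters gives $\operatorname{span}(\tilde T v)=\bigoplus_\mu\C v_\mu$, and non-degeneracy forces each $V_\mu$ to be a line and each $v_\mu\ne 0$. In the basis $(v_\mu)$ the torus $T$ acts diagonally with pairwise distinct weights $\nu_0,\dots,\nu_r$, the dense-orbit point is $[1:\dots:1]$, and $Y=X(\{\nu_i\})$; all $\nu_i$ restrict to the standard character of the central $\C^*$, so $\nu_i-\nu_0\in X^*(T)$ and these span a subgroup of rank $d=\dim Y$. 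Feeding $\{\nu_i-\nu_0\}$ into the lemma of Step 1 produces an affinely generating $S\subseteq\Z^d$ with $X(S)=Y$ up to a coordinate permutation and $\#S=r+1$; the class $[S]$ is independent of the choices made (another lift of $T$ twists all $\nu_i$ by one character, i.e.\ translates the configuration; re-ordering and re-basing give permutations and elements of $\GL(d,\Z)$). Write $\sigma(Y,T):=[S]$; by construction $\sigma$ is unchanged under simultaneously conjugating $(Y,T)$ by $\PGL(r+1)$.

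\emph{Step 3: surjectivity.} Let $X\subseteq\P^N$ be toric of dimension $d$; its linear span is some $\P^k$ with $k\le N$, $X$ is non-degenerate in $\P^k$, and the image of the acting torus in $\PGL(k+1)$ is a subtorus with a dense orbit on $X$. Step 2 gives $X\cong_{\mathrm{proj}}X(S)$ for an affinely generating $S\subseteq\Z^d$ with $\#S=k+1\le N+1$.

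\emph{Step 4: injectivity, and the main obstacle.} Suppose $S,S'\subseteq\Z^d$ are affinely generating (the common $d$ is forced since $X(S)\cong X(S')$) and $g(X(S))=X(S')$ with $g\in\PGL(N+1)$. Put $Y=X(S)$, $Y'=X(S')$ with their canonical diagonal tori $T,T'$; these are non-degenerate in their spans $\P^r=\P^{r'}$, so $r=r'$ and we work in $\P^r$, where $gTg^{-1}$ is a second subtorus of $\PGL(r+1)$ with a dense orbit on $Y'$. The crux is the \emph{Claim}: $T'$ and $gTg^{-1}$ are conjugate inside $G':=\{h\in\PGL(r+1):h(Y')=Y'\}$. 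Granting it, Step 2 finishes: $\sigma(Y',T')=[S']$ tautologically, $\sigma(Y',gTg^{-1})=\sigma(Y,T)=[S]$ by $\PGL$-conjugation-invariance, and $\sigma(Y',T')=\sigma(Y',gTg^{-1})$ by the Claim and the same invariance (the conjugating element lies in $G'$), so $[S]=[S']$. To prove the Claim I show that every subtorus $T''\subseteq\PGL(r+1)$ with a dense orbit on $Y'$ is a maximal torus of the linear algebraic group $G'$; all such are then conjugate in $G'$. Replacing the pair $(Y',T'')$ by a $\PGL(r+1)$-conjugate (which does not affect maximality), Step 2 puts $T''$ in diagonal form with pairwise distinct weights on the coordinate lines, so a generic element of the diagonal lift of $\operatorname{Lie}(T'')$ has pairwise distinct eigenvalues and hence $Z_{\PGL(r+1)}(T'')=D$, the full diagonal torus. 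On the other hand, for $h=[\diag(h_0,\dots,h_r)]\in D$ one has $h(Y')=\overline{T''\cdot[h_0:\dots:h_r]}$, which equals $Y'=\overline{T''\cdot[1:\dots:1]}$ iff $[h_0:\dots:h_r]$ lies in the (unique) dense orbit, i.e.\ iff $h\in T''$; thus $G'\cap D=T''$ and so $Z_{G'}(T'')=G'\cap D=T''$, forcing $T''$ to be maximal. The identification $Z_{G'}(T'')=T''$ — that no larger torus of $\PGL(r+1)$ stabilising $Y'$ exists — is the one genuinely nontrivial point; everything else is bookkeeping.
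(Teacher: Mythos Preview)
Your proof is correct and follows essentially the same architecture as the paper's: well-definedness, dimension, surjectivity via diagonalising the torus action, and injectivity via the key fact that a torus with a dense orbit on $X$ is a maximal torus of the projective stabiliser of $X$, so any two such are conjugate. The only noteworthy differences are presentational: you package the weight data into an invariant $\sigma(Y,T)$ and prove maximality by the explicit centraliser computation $Z_{G'}(T'')=G'\cap D=T''$ in the diagonalised model, whereas the paper argues abstractly in $\Aut X$ that any element of a larger torus must fix the dense orbit pointwise; these are the same idea in different clothing.
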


Our main application is to toric varieties attached to lattice
polytopes. Recall that a \emph{lattice polytope} $P$ is the convex
hull inside $\R^d$ of a finite subset of $\Z^d$. Thus,
$S_P:=P\cap\Z^d$ is finite and $P=\conv S_P$. Lattice polytopes are in one-to-one correspondence with pairs of complete (abstract) toric varieties together with a complete and ample linear series.
We call a lattice polytope 
\emph{$\Z$-solid} if $S_P$ affinely generates
$\Z^d$. Clearly every $\Z$-solid lattice polytope is solid, i.e.,
$\dim P=d$. The converse is not true. Counterexamples are provided at
the end of the paper (Example~\ref{exm:solid}). Every lattice polytope
with the integer decomposition property is $\Z$-solid and in particular,
every lattice polygon is $\Z$-solid.

For a lattice polytope $P$, put $X(P):=X(S_P)$. Then applying
Theorem~\ref{thm:main} to $X(P)$ we get:

\begin{Cor}

  Let $P,P'\subseteq\R^d$ be two $\Z$-solid lattice polytopes with
  $\#{S_P},\#{S_{P'}}\le N+1$. Then $X(P)$ and $X(P')$ are projectively
  equivalent if and only if $P$ and $P'$ are affinely equivalent.
  In particular, it follows that $X(P) = X(P')$.

\end{Cor}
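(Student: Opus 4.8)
The plan is to read the corollary off Theorem~\ref{thm:main} by translating both sides into statements about the lattice point sets $S_P=P\cap\Z^d$ and $S_{P'}=P'\cap\Z^d$. First I would check that the theorem is applicable to $X(P)=X(S_P)$ and $X(P')=X(S_{P'})$: each is toric by construction, and each has dimension exactly $d$. For the latter, note that since $P$ is $\Z$-solid the differences $m-m'$ with $m,m'\in S_P$ generate $\Z^d$, so the kernel of \eqref{eq:hom} — the set of $t$ with $t^{m_i-m_j}=1$ for all $i,j$ — is trivial, and for the same reason the stabilizer in $T$ of $x=[1:\ldots:1:0:\ldots:0]$ is trivial; hence $\dim X(P)=\dim\overline{Tx}=\dim T=d$, and likewise for $X(P')$. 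As also $\#{S_P},\#{S_{P'}}\le N+1$ by hypothesis, Theorem~\ref{thm:main} applies and gives that $X(P)$ and $X(P')$ are projectively equivalent if and only if $S_P$ and $S_{P'}$ are affinely equivalent.

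Next I would observe that affine equivalence of $S_P$ and $S_{P'}$ as subsets of $\Z^d$ is exactly the same as affine equivalence of $P$ and $P'$ as lattice polytopes. This is elementary: a map $\Phi(m)=Am+b$ with $A\in\GL(d,\Z)$, $b\in\Z^d$ is a bijection of $\R^d$ which maps $\Z^d$ onto itself and commutes with the formation of convex hulls, so $S_{P'}=\Phi(S_P)$ forces $P'=\conv S_{P'}=\Phi(\conv S_P)=\Phi(P)$, while conversely $P'=\Phi(P)$ forces $S_{P'}=P'\cap\Z^d=\Phi(P)\cap\Phi(\Z^d)=\Phi(S_P)$. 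Combined with the previous step, this proves the stated equivalence.

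For the final assertion, assume $P$ and $P'$ are affinely equivalent, say $P'=\Phi(P)$ with $\Phi$ as above, so that $S_{P'}=\Phi(S_P)$; enumerate $S_P=\{m_0,\ldots,m_r\}$ and $S_{P'}=\{\Phi(m_0),\ldots,\Phi(m_r)\}$ compatibly. In $\P^N$ the $T$-orbit of $x$ attached to $S_{P'}$ is
\[
\{[t^{Am_0+b}:\ldots:t^{Am_r+b}:0:\ldots:0]:t\in(\C^*)^d\}
=\{[t^{Am_0}:\ldots:t^{Am_r}:0:\ldots:0]:t\in(\C^*)^d\},
\]
the common scalar $t^b$ being irrelevant in projective space; and because $A\in\GL(d,\Z)$, the tuple $(t^{Am_0},\ldots,t^{Am_r})$ ranges precisely over the tuples $(s^{m_0},\ldots,s^{m_r})$, $s\in(\C^*)^d$. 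Hence this orbit, and therefore its closure, coincides with the $T$-orbit of $x$ attached to $S_P$ and its closure; that is, $X(P')=X(P)$ as subvarieties of $\P^N$. The whole argument is essentially formal given Theorem~\ref{thm:main}; the only point that genuinely needs care is the dimension count $\dim X(P)=d$, which is required to invoke the theorem — stated as it is for toric subvarieties of dimension exactly $d$ — and which is precisely where the $\Z$-solidity hypothesis is used.
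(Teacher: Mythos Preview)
Your argument is correct and matches the paper's approach: the corollary is stated there without an explicit proof, only the remark that it follows by applying Theorem~\ref{thm:main} to $X(P)$, and you have simply written out that application in full. Your dimension verification is exactly Lemma~\ref{lem:dim}, and your final computation showing $X(P)=X(P')$ under affine equivalence is exactly the content of the paper's first lemma (Lemma~2.1); so nothing in your proposal departs from the paper's intended route.
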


We cannot simply drop the assumption that $P$ and $P'$ are $\Z$-solid.
\begin{Exm}
Consider the two full-dimensional tetrahedra 
\[
P = \conv\{0,e_1,e_2,e_3\} \text{ and } P' = \conv\{0,e_1,e_2,e_1+e_2+2e_3\}
\]
in $\R^3$. Clearly, the lattice points in $P$ generate the $\Z^3\subset\R^3$ as a group. On the other hand, the lattice polytope $P'$ is not $\Z$-solid. The toric varieties corresponding to these two polytopes with $4$ lattice points is $\P^3$ in both cases. In particular, they are projectively equivalent, whereas the lattice polytopes are not affinely equivalent.
\end{Exm}

One can also wonder when two toric varieties of the form $X(S)$ are
projectively equivalent if $S$ is not necessarily affinely
generating. In this case, let $M(S)\subseteq\Z^d$ be the subgroup
generated by all $m-m'$, $m,m'\in S$. Choose any isomorphism
$\Phi:M(S)\to\Z^e$ and let $\tilde S:=\Phi(S-m)$ where $m$ is any
element of $S$. Then $\tilde S$ affinely generates $\Z^e$.

\begin{Cor}

  Let $S\subseteq\Z^d$ be a subset with $\#S\le N+1$. Then $X(S)$ is
  projectively equivalent to $X(\tilde S)$.
  
\end{Cor}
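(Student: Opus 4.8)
The plan is to prove the sharper statement that $X(S)$ and $X(\tilde S)$ are in fact \emph{equal} as subvarieties of $\P^N$; projective equivalence then holds trivially with $g$ the identity (as in the Corollary above).

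\emph{Reduction to $0\in S$.} First I would observe that $X(S)=X(S-m)$ inside $\P^N$ for every $m\in S$. Indeed, by \eqref{eq:hom} the orbit $Tx$ consists of the points $[t^{m_0}:\dots:t^{m_r}:0:\dots:0]$, $t\in(\C^*)^d$, and multiplying such a homogeneous coordinate vector by the nonzero scalar $t^{m}$ identifies this set with the corresponding orbit for $S-m$ (the trailing zeros stay zero), so the two orbit closures coincide. Since $M(S-m)=M(S)$ and one may take $0$ as the distinguished translation point of $S-m$, this replacement changes neither $X(S)$ nor $\tilde S$; hence from now on I assume $0\in S$, so that every $m_i$ lies in $M:=M(S)$ and $\tilde S=\Phi(S)$.

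\emph{Matching the two subgroups of $\PGL(N+1)$.} Let $\alpha\colon(\C^*)^d\to(\C^*)^e$ be the homomorphism of tori corresponding, under the usual anti-equivalence between split tori and their character lattices, to the lattice homomorphism $\Phi^{-1}\colon\Z^e\xrightarrow{\sim}M\hookrightarrow\Z^d$; since this lattice map is injective, $\alpha$ is surjective. Using $\Phi^{-1}\Phi(m_i)=m_i$ one checks by a direct monomial computation that the homomorphism \eqref{eq:hom} attached to $S$ equals the composite of $\alpha$ followed by the homomorphism \eqref{eq:hom} attached to $\tilde S$. Because $\alpha$ is surjective, these two homomorphisms have the same image $T\subseteq\PGL(N+1)$. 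Finally $\#{\tilde S}=\#S$ (as $\Phi$ and translation are bijections), so $\tilde S$ is an admissible input for the construction and the base point $x=[1:\dots:1:0:\dots:0]$ is literally the same in both cases; therefore $X(S)=\overline{Tx}=X(\tilde S)$.

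\emph{Where the care is needed.} The one point not to gloss over is that $M(S)$ need not be a direct summand of $\Z^d$ (for instance $2\Z\subseteq\Z$), so one cannot extend $\Phi$ to an automorphism of $\Z^d$ and argue that way; instead one uses only that the character‑lattice map $\Phi^{-1}\colon\Z^e\hookrightarrow\Z^d$ is injective, which already forces $\alpha$ to be surjective — precisely what is needed to conclude that the two images agree. It is also worth keeping straight that, even after arranging $0\in S$, the lattice in play throughout is $M(S)$ (generated by the differences), which in this normalized situation happens to coincide with the subgroup generated by $S$ itself.
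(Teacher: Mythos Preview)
Your argument is correct, and in fact establishes the stronger statement $X(S)=X(\tilde S)$ as subvarieties (with compatible enumerations). The paper states this corollary without proof; your direct verification via the factorization of the homomorphism \eqref{eq:hom} through the surjection $\alpha\colon(\C^*)^d\to(\C^*)^e$ is precisely in the spirit of the paper's opening lemmas in Section~2 (translation/reparametrization invariance, and the passage to the affinely generating sublattice carried out in the surjectivity lemma), so nothing further is needed.
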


\begin{Rem*}
  In this paper, we work for convenience over $\C$ only. It is not
  difficult to see, that all results hold over an arbitrary
  algebraically closed ground field. In positive characteristic, only
  the proof of Lemma~\ref{lem:dim} requires a slight modification.
\end{Rem*}

\section{The proof}
As a general reference for toric varieties from the algebraic geometry point of view, we refer to \cite{CLS}. The basic facts from representation theory that we use throughout the paper can be found in \cite{Hump}.

The proof of Theorem~\ref{thm:main} proceeds in several steps. First
we observe that the map $S\mapsto X(S)$ is well defined:

\begin{Lem}

  If $S,S'\subseteq\Z^d$ are affinely equivalent then
  $X(S),X(S')\subseteq\P^N$ are equal, so in particular
  projectively equivalent.
  
\end{Lem}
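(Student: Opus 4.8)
The plan is to track the effect of an affine equivalence on both ingredients of the construction of $X(S)$: the homomorphism~\eqref{eq:hom} and the base point $x$. Write the affine transformation as $\Phi(m)=Am+b$ with $A\in\GL(d,\Z)$ and $b\in\Z^d$, and enumerate the two sets compatibly, $S=\{m_0,\ldots,m_r\}$ and $S'=\{\Phi(m_0),\ldots,\Phi(m_r)\}$. Since $\Phi$ is a bijection we have $\#S=\#S'$, so $r$, the number $N-r$ of trailing zeros, and hence the point $x=[1:\ldots:1:0:\ldots:0]$ are literally the same in the two constructions. It therefore remains to compare the diagonal-matrix homomorphisms~\eqref{eq:hom} attached to $S$ and to $S'$.

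First I would dispose of the translation $b$. For $t\in(\C^*)^d$ we have $t^{\Phi(m_i)}=t^b\cdot t^{Am_i}$, and the common scalar $t^b$ is invisible in $\PGL(N+1)$; clearing it turns the homomorphism attached to $S'$ into $t\mapsto\diag(t^{Am_0},\ldots,t^{Am_r},t^{-b},\ldots,t^{-b})$. Applied to $x$, whose last $N-r$ coordinates vanish, this has exactly the same effect as $t\mapsto\diag(t^{Am_0},\ldots,t^{Am_r},1,\ldots,1)$. Hence the orbit of $x$ under $T'$ agrees with the orbit of $x$ under the image of this last homomorphism, and we may assume $b=0$, i.e.\ $S'=AS$.

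Next I would absorb $A$ into an automorphism of the torus. Because $A$ is invertible over $\Z$, the map $\alpha\colon(\C^*)^d\to(\C^*)^d$ with $\alpha(t)_k=\prod_j t_j^{A_{jk}}$ is an automorphism of $(\C^*)^d$ (its inverse is given by $A^{-1}$), and a one-line computation gives $\alpha(t)^m=t^{Am}$ for all $m\in\Z^d$. Thus the homomorphism~\eqref{eq:hom} attached to $AS$ is the one attached to $S$ precomposed with $\alpha$; as $\alpha$ is surjective, both have the same image $T\subseteq\PGL(N+1)$, and in particular the same orbit through $x$. Combining this with the previous paragraph yields $X(S')=\overline{Tx}=X(S)$.

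The argument is short, and the only point needing care is the bookkeeping between $\GL$ and $\PGL$: one must notice that the translation part of $\Phi$ contributes only a scalar, which is harmless in $\PGL(N+1)$ and, on the nonzero coordinates of $x$, can in any case be shifted onto the trailing block of the diagonal matrix, where $x$ annihilates it. (In positive characteristic the statement and this proof are unaffected; only Lemma~\ref{lem:dim}, referenced in the introductory remark, needs adjustment.)
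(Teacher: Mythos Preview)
Your proof is correct and follows exactly the same strategy as the paper's: the translation $b$ contributes only a global scalar, hence vanishes in $\PGL(N+1)$ (your extra remark that on $x$ one may alternatively shift it onto the zero coordinates is a harmless embellishment), and the linear part $A$ amounts to a reparametrization of the torus $(\C^*)^d$, so the image subgroup $T$ and hence the orbit $Tx$ are unchanged. The paper compresses all of this into two clauses, but the content is identical.
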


\begin{proof}
  We show that any affine linear isomorphism $m\mapsto Am+b$ of $\Z^d$ does 
  not change $Tx\subset\P^N$. Indeed, $A$ results simply in a 
  reparametrization of $(\C^*)^d$ while $b$ corresponds to a 
  multiplication by a non-zero constant factor which drops out by
  homogeneity.
\end{proof}

Next we show that dimensions are preserved:

\begin{Lem}\label{lem:dim}
  Let $S\subseteq\Z^d$ be affinely generating. Then
  $\chi:(\C^*)^d\to X(S):t\mapsto tx$ is an open embedding. Hence
  $\dim X(S)=d$.
\end{Lem}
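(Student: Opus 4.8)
The plan is to factor the orbit map $\chi$ as a closed embedding of tori followed by the inclusion of a torus chart of a linear subspace of $\P^N$. Since $\chi(t)=[t^{m_0}:\cdots:t^{m_r}:0:\cdots:0]=[1:t^{m_1-m_0}:\cdots:t^{m_r-m_0}:0:\cdots:0]$ by homogeneity, replacing $S$ by $S-m_0$ changes neither $\chi$ nor (by the preceding lemma) $X(S)$, so we may assume $m_0=0$. Then the hypothesis that $S$ is affinely generating says precisely that $m_1,\dots,m_r$ generate the group $\Z^d$, and $\chi=\iota\circ\phi$, where
\[
  \phi\colon(\C^*)^d\to(\C^*)^r,\quad t\mapsto(t^{m_1},\dots,t^{m_r}),
\]
and
\[
  \iota\colon(\C^*)^r\to\P^N,\quad(s_1,\dots,s_r)\mapsto[1:s_1:\cdots:s_r:0:\cdots:0].
\]

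Next I would check the two factors. The map $\iota$ is an isomorphism onto the locally closed set $\{y_0y_1\cdots y_r\neq0,\ y_{r+1}=\cdots=y_N=0\}$, the dense torus orbit inside the linear subspace $\{y_{r+1}=\cdots=y_N=0\}\cong\P^r$, so $\iota$ is a locally closed embedding. The map $\phi$ is a homomorphism of algebraic tori whose effect on character lattices is $\Z^r\to\Z^d$, $e_i\mapsto m_i$; since the $m_i$ generate $\Z^d$ this map is surjective, hence the induced homomorphism of coordinate rings $\C[\Z^r]\to\C[\Z^d]$ is surjective and $\phi$ is a closed embedding (the standard dictionary between subtori and quotients of the character lattice; see \cite{Hump}). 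Composing, $\chi$ is a locally closed embedding; its image $Tx$ is therefore open in its closure $X(S)=\overline{Tx}$, so $\chi\colon(\C^*)^d\to X(S)$ is an open embedding and $\dim X(S)=\dim(\C^*)^d=d$.

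The one delicate point is the implication ``$\phi$ surjective on character lattices $\Rightarrow$ $\phi$ a closed embedding'': this is where one genuinely needs more than the easy fact that $\chi$ is injective on points, equivalently that the stabilizer $\{t\in(\C^*)^d:t^{m_i-m_j}=1\text{ for all }i,j\}$ of $x$ is trivial. In characteristic zero one could instead invoke that the orbit map of a torus action with trivial stabilizer is an isomorphism onto its (automatically locally closed) orbit; in positive characteristic one must argue with coordinate rings as above, or equivalently verify that the scheme-theoretic stabilizer is trivial --- this is the modification alluded to in the remark. Everything else in the argument is routine.
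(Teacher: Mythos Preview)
Your proof is correct and somewhat more explicit than the paper's. The paper argues in two lines: the image $Tx$ is open in $X(S)$ because orbits are locally closed and this one is dense; then it suffices that $\chi^{-1}(x)=\{t:t^{m_i-m_0}=1\text{ for all }i\}$ is trivial, which follows from the affinely generating hypothesis. You instead factor $\chi$ as a torus homomorphism followed by a coordinate chart and verify directly, via the character-lattice/coordinate-ring dictionary, that the first factor is a closed embedding. Both arguments hinge on the same fact (the $m_i-m_0$ generate $\Z^d$) but package it differently: the paper's version implicitly invokes ``trivial set-theoretic stabilizer $\Rightarrow$ orbit map is an isomorphism onto the orbit,'' a characteristic-zero statement, as the paper's remark acknowledges; your coordinate-ring argument is characteristic-free and thus already supplies the ``slight modification'' the paper alludes to.
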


\begin{proof}
  The map $\chi$ is dominant by definition. Hence the image is
  open. It suffices to show that $\chi^{-1}(x)=1$. We have
  \begin{align*}
    \chi^{-1}(x)=&\{t\in(\C^*)^d\mid t^{m_0}=\ldots=t^{m_r}\}=\\
    =&\{t\in(\C^*)^d\mid t^{m_1-m_0}=\ldots=t^{m_r-m_0}=1\}.
  \end{align*}
The latter group is trivial since $\Z^d$ is affinely generated by $S$, so that these characters separate the torus.
\end{proof}

To show surjectivity, we first investigate arbitrary homomorphisms
$\rho:T\to\PGL(N+1)$, where $T$ is a torus. The image is contained in a maximal torus of $\PGL(N+1)$, which is conjugate to the maximal torus $\{\diag(1,x)\colon x\in(\C^*)^N\}\subset\PGL(N+1)$. This torus can clearly be lifted to $\GL(N+1)$ and hence also $\rho$ can be lifted to a homomorphism
$\tilde\rho:T\to\GL(N+1)$. Any two lifts
differ by a homomorphism of $T$ into the scalars. 
Via this lift $\tilde\rho$, the torus $T$ acts on $\C^n$, which decomposes into simultaneous eigenspaces and the eigenvalues corresponding to an eigenspace are given by the corresponding eigencharacter on $T$. 
Thus the set of
eigencharacters $E(\rho;\P^N)\subseteq M_T$ is well defined up to translation by an element of the lattice of characters $M_T$.

\begin{Lem}
  For every toric subvariety $X\subseteq\P^N$ of dimension $d$ there
  is a affinely generating subset $S\subseteq\Z^d$ such that that $X$
  is projectively equivalent to $X(S)$. 
\end{Lem}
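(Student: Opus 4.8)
The plan is to reverse-engineer the subset $S$ from the given toric subvariety $X \subseteq \P^N$. Let $T \subseteq \PGL(N+1)$ be a subtorus with $X = \overline{Tx}$ for some $x \in \P^N$. First I would apply the discussion preceding the lemma: choose a lift $\tilde\rho : T \to \GL(N+1)$ of the inclusion $T \hookrightarrow \PGL(N+1)$, so that $\C^{N+1}$ decomposes into simultaneous $T$-eigenspaces with eigencharacters in $M_T \cong \Z^k$ where $k = \dim T$. Pick a representative vector $v \in \C^{N+1} \setminus \{0\}$ of the point $x$, and write $v = \sum_{i} v_i$ in terms of this eigenspace decomposition. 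Let $E \subseteq M_T$ be the finite set of eigencharacters $\chi$ for which the corresponding component $v_\chi$ is nonzero; equivalently, after reordering the coordinates so that these nonzero components come first and are spread over one-dimensional eigenspaces (using that $T$ acting on $\overline{v}$ only sees the characters in $E$, so we may restrict to the span of the $v_\chi$ and further split multiplicities by the diagonal torus), we may assume $x = [c_0 : \cdots : c_r : 0 : \cdots : 0]$ with all $c_j \ne 0$ and the eigencharacter on the $j$-th coordinate equal to $\chi_j \in M_T$, and $E = \{\chi_0,\ldots,\chi_r\}$. Rescaling coordinates of $\P^N$ by the projective automorphism $\diag(c_0^{-1},\ldots,c_r^{-1},1,\ldots,1)$ (which does not commute with $T$ in general, but that is allowed) we may take $x = [1:\cdots:1:0:\cdots:0]$.

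Next I would translate: since only differences of eigencharacters matter projectively, replace $E$ by $S_0 := \{\chi_j - \chi_0 : j = 0,\ldots,r\} \subseteq M_T$, which still has $x = [1:\cdots:1:0:\cdots:0]$ as base point and now the homomorphism $T \to \PGL(N+1)$ is literally $t \mapsto \diag(t^{\chi_0 - \chi_0},\ldots,t^{\chi_r - \chi_0},1,\ldots,1)$ up to the scalar $t^{\chi_0}$, which drops out projectively. So $X$ is, as a subvariety of $\P^N$, exactly the orbit closure $\overline{T'x}$ where $T'$ is the image of $M_T^\vee = \Hom(M_T,\C^*)$; that is, $X = X(S_0)$ after identifying $M_T \cong \Z^k$. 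Now let $M(S_0) \subseteq \Z^k$ be the subgroup affinely generated by $S_0$ — equivalently the subgroup generated by the $\chi_j - \chi_0$. The subtorus of $T'$ acting trivially on $\P^N$ is precisely the kernel of all these characters, so replacing $T'$ by its image (equivalently, replacing $\Z^k$ by $M(S_0)$ and $S_0$ by its image) changes neither $X$ nor the orbit closure. Choosing an isomorphism $\Phi : M(S_0) \xrightarrow{\sim} \Z^d$ and setting $S := \Phi(S_0)$, we get an affinely generating subset $S \subseteq \Z^d$ with $\#S = \#S_0 \le r+1 \le N+1$ and, by the Lemma on affine equivalence together with this identification, $X = X(S)$ as subvarieties of $\P^N$. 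Finally, $d = \dim X$: indeed $d = \operatorname{rank} M(S_0) = \dim T'/(\text{kernel})$ is the dimension of the effectively acting torus, whose orbit is dense in $X$, and Lemma~\ref{lem:dim} then gives $\dim X(S) = d$, so the dimension matches the hypothesis.

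The main obstacle is the bookkeeping in the first paragraph: one must genuinely produce coordinates on $\P^N$ in which $T$ acts diagonally and $x$ has the standard form, and be careful that the rescaling and coordinate-permutation steps are effected by honest elements of $\PGL(N+1)$ (which is fine — we only need projective equivalence, not $T$-equivariant equivalence). The one subtlety worth checking is that discarding the coordinates on which $v$ vanishes is legitimate: the orbit closure $\overline{Tx}$ lies entirely in the linear subspace $\{$those coordinates $= 0\}$, so $X$ is unchanged, though one should note that after doing so the ambient projective space has shrunk and one must not worry about this since $S$ is required only to satisfy $\#S \le N+1$, which is preserved. Everything else — the passage to differences, quotienting by the ineffective subtorus, and transporting along $\Phi$ — is routine given the two lemmas already proved.
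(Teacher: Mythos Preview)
Your proof is correct and follows essentially the same route as the paper: lift $T$ to $\GL(N+1)$, diagonalize, isolate the eigencharacters supporting a lift of $x$, translate by $\chi_0$, pass to the sublattice these differences generate, and invoke Lemma~\ref{lem:dim} to match the rank with $\dim X$. The only cosmetic difference is that the paper absorbs your separate rescaling step by taking the nonzero $v_\chi$ themselves as the first $r{+}1$ members of the eigenbasis, and it names the rank $e$ before identifying it with $d$, avoiding the mild notational circularity in your last paragraph.
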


\begin{proof}

  Let $X=\overline{Tx}$ with $T\subset\PGL(N+1)$ a torus. Lift the
  inclusion $\rho$ to a homomorphism $\tilde\rho:T\to\GL(N+1)$. Let
  $E:=E(\rho;\P^N)$ be the set of its eigencharacters and let
  $\C^{N+1}=\bigoplus_{\chi\in E}V_\chi$ be the corresponding
  decomposition in eigenspaces. Lift also $x$ to a vector
  $v\in\C^{N+1}$ and decompose it: $v=\sum_{\chi\in E}v_\chi$. Put
  $S_0:=\{\chi\in E\mid v_\chi\ne0\}$ and choose an eigenbasis whose
  first $r+1$ members are the $v_\chi$ with $\chi\in S_0$. After a
  projective transformation we may assume that this eigenbasis is the
  standard basis. If $S_0=\{\chi_0,\ldots,\chi_r\}$ we see that then
  \[
    Tx=\{[\chi_0(t):\ldots:\chi_r(t):0:\ldots:0]\in\P^N\mid t\in T\}
  \]
  Let $M\subseteq M_T$ be the subgroup generated by all ratios
  $\chi_i\chi_j^{-1}$ and let $\chi_i':=\chi_i\chi_0^{-1}$. Then $M$
  is affinely generated by $S:=\{\chi_i'\mid 0\le i\le j\}$. Fix an
  isomorphism $M\cong\Z^e$ for some $e\in\N$ such that $S$ becomes an
  affinely generating subset of $\Z^e$. By construction, we have
  $X=\overline{Tx}=X(S)$. Lemma~\ref{lem:dim} then implies $e=d$.
\end{proof}

Next we prove the key lemma. For any subvariety $X\subseteq\P^N$ let
$G(X)\subseteq\Aut X$ be the group of all automorphisms which can be
extended to an automorphism of $\P^N$. If $X=\overline{Tx}$ is
now toric, then restriction to $X$ yields a homomorphism $\rho:T\to G(X)$.

\begin{Lem}\label{lem:max}
  $T_0:=\rho(T)$ is a maximal torus of $G(\overline{Tx})$. 
\end{Lem}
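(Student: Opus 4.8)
The plan is to prove maximality directly: I would show that every subtorus $T_1\subseteq G(\overline{Tx})$ with $T_0=\rho(T)\subseteq T_1$ must coincide with $T_0$. First I would pin down the setup. For $X=\overline{Tx}$, the group $G(X)$ is a linear algebraic group acting faithfully and algebraically on $X$: concretely it is $\Gamma/\Gamma_0$, where $\Gamma=\{g\in\PGL(N+1):g(X)=X\}$ is a closed subgroup and $\Gamma_0$ is the closed normal subgroup of elements restricting to the identity on $X$; since $T\subseteq\Gamma$, the map $\rho$ is a morphism of algebraic groups and $T_0=\rho(T)$ is a subtorus. I would also record that $U:=T_0\cdot x=T\cdot x$ is a single $T_0$-orbit, hence locally closed, and its closure is $\overline{Tx}=X$, so $U$ is open and dense in $X$.

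The heart of the argument is that $U$ is stable under $T_1$. Fix $t\in T_1$. Since $T_1$ is abelian and contains $T_0$, the element $t$ commutes with $T_0$, so $t\cdot U=tT_0\cdot x=T_0\cdot(t\cdot x)$ is again a single $T_0$-orbit. Being the image of the open set $U$ under the automorphism $t$ of $X$, it is open and nonempty, hence meets the dense set $U$; two $T_0$-orbits that meet are equal, so $t\cdot U=U$. Consequently $T_1$ acts on $U$, and because the subgroup $T_0$ already acts transitively on $U$, we obtain $T_1=T_0\cdot H$ with $H:=\{t\in T_1:t\cdot x=x\}$ the $T_1$-stabilizer of $x$.

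Finally I would show $H=\{1\}$. Writing an arbitrary point of $U$ as $s\cdot x$ with $s\in T_0\subseteq T_1$, for $h\in H$ we get $h\cdot(s\cdot x)=s\cdot(h\cdot x)=s\cdot x$ by commutativity of $T_1$, so $h$ fixes $U$ pointwise. As $U$ is dense and $X$ is a variety (separated and reduced), $h$ is the identity automorphism of $X$, hence $h=1$ because $G(X)$ acts faithfully on $X$. Therefore $H=\{1\}$ and $T_1=T_0$, which is exactly the maximality of $T_0$.

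I do not expect a genuine obstacle: once $U$ is known to be open, everything is elementary commutative-group bookkeeping together with faithfulness of $G(X)$ on $X$. The one point that needs care is the standing structural fact that $G(X)$ is a linear algebraic group acting faithfully and algebraically on $X$ — this is what makes "torus in $G(X)$" meaningful and guarantees that the orbit $U$ is open in its closure — but it is routine from the presentation $G(X)=\Gamma/\Gamma_0$ with $\Gamma\subseteq\PGL(N+1)$ closed. It is perhaps worth noting that the argument uses neither $\dim T_0=d$ nor Lemma~\ref{lem:dim}.
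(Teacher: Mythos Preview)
Your argument is correct and essentially identical to the paper's own proof: both show that any element of a larger torus $T_1\supseteq T_0$ preserves the open orbit $Tx$ (since it commutes with $T_0$), hence differs from some $t_0\in T_0$ by an element fixing $x$, which by commutativity fixes all of $Tx$ and therefore all of $X$, forcing it to be the identity by faithfulness. Your write-up is more explicit about the algebraic-group structure of $G(X)$ and the openness of the orbit, but the route is the same.
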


\begin{proof}
  Set $X = \overline{Tx}$. Choose a maximal torus $T_1\subseteq G(X)$ containing $T_0$. We are
  going to show that every $t_1\in T_1$ lies in $T_0$. Since $t_1$
  normalizes $T_0$, it permutes the $T_0$-orbits of $X$. The open
  $T_0$-orbit $Tx$ is unique, hence it is preserved by $t_1$. Thus,
  there is $t_0\in T_0$ with $t_1x=t_0x$, implying that
  $t':=t_1t_0^{-1}$ fixes $x$. Since $t'$ centralizes $T_0$, it will
  fix every point of $Tx$. Thus it acts trivially on the closure
  $X$. This shows $t'=1$ and therefore $t_1=t_0\in T_0$.
  \end{proof}

The following Lemma finishes the proof of Theorem~\ref{thm:main}.

\begin{Lem}
  Let $S,S'\subseteq\Z^d$ be affinely generating and assume that
  $X:=X(S)$, $X':=X(S')$ are projectively equivalent. Then $S$, $S'$
  are affinely equivalent.
\end{Lem}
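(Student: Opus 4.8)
The plan is to exploit Lemma~\ref{lem:max}: both $X$ and $X'$ are toric orbit closures, and inside $G(X)$ the image of the acting torus is a \emph{maximal} torus. If $g\in\PGL(N+1)$ realizes the projective equivalence $g(X)=X'$, then conjugation by $g$ induces an isomorphism $G(X)\xrightarrow{\sim}G(X')$ (since $g$ identifies automorphisms of $X$ extendable to $\P^N$ with those of $X'$). Let $T_0\subseteq G(X)$ and $T_0'\subseteq G(X')$ be the tori coming from the constructions of $X(S)$ and $X(S')$; by Lemma~\ref{lem:max} both are maximal tori. Hence $gT_0g^{-1}$ and $T_0'$ are two maximal tori of $G(X')$, so they are conjugate by some $h\in G(X')$, which in turn extends to an element $\tilde h\in\PGL(N+1)$. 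Replacing $g$ by $\tilde h g$, we may assume from the outset that $g T_0 g^{-1}=T_0'$ as subgroups of $\PGL(N+1)$, i.e.\ $g$ normalizes the relevant pair of maximal tori and carries one acting torus to the other.

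Next I would translate this into the language of eigencharacters introduced before Lemma~2.6. With the normalization just obtained, $g$ conjugates the maximal torus of $\PGL(N+1)$ containing $T_0$ to the one containing $T_0'$; such a $g$ is, up to the torus itself, a permutation of coordinates together with a linear automorphism of the cocharacter lattice of the ambient maximal torus. Concretely, since $g$ intertwines the two torus actions, it identifies the eigenspace decomposition of $\C^{N+1}$ for $T_0$ with that for $T_0'$, and hence induces a bijection between the eigencharacter sets $E(\rho;\P^N)$ and $E(\rho';\P^N)$ that is compatible with an isomorphism of the character lattices $M_T\xrightarrow{\sim}M_{T'}$. Because $g(X)=X'$ and $X,X'$ are the respective orbit closures of the specific points $x,x'$, the bijection must moreover carry the support $S_0=\{\chi\in E\mid v_\chi\neq0\}$ (in the notation of the surjectivity lemma) onto the corresponding support $S_0'$ for $X'$: these supports are exactly the sets of coordinates that are nonzero on the open orbit, and $g$ sends the open orbit of $X$ to that of $X'$.

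From here the argument is bookkeeping. The lattice isomorphism $M_T\to M_{T'}$ restricts to a group isomorphism between the subgroups generated by the differences of characters in $S_0$ and in $S_0'$; under the identifications $M\cong\Z^d$ and $M'\cong\Z^d$ furnished by affine generation (Lemma~\ref{lem:dim} guarantees the rank is $d$ in both cases), this becomes an element of $\GL(d,\Z)$. The ambiguity in lifting $\rho$ to $\GL(N+1)$ and in choosing the base vectors $v,v'$ — i.e.\ translation of the eigencharacter set by an element of $M_T$ — supplies the translation part $b\in\Z^d$ of the affine map. Putting $A$ and $b$ together, $\Phi(m)=Am+b$ sends $S$ to $S'$, so $S$ and $S'$ are affinely equivalent, which is what we wanted. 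The main obstacle I anticipate is the second step: making precise that an element of $\PGL(N+1)$ normalizing a maximal torus acts on eigencharacters merely by a lattice automorphism plus coordinate permutation, and checking carefully that the open-orbit condition forces the induced bijection to respect the distinguished subsets $S_0,S_0'$ rather than some larger eigencharacter set; once that is nailed down, the passage to $\GL(d,\Z)\ltimes\Z^d$ is routine.
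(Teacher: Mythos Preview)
Your strategy mirrors the paper's---invoke Lemma~\ref{lem:max}, conjugate to align the tori, then read off the affine equivalence from eigencharacters---but you skip the paper's opening reduction to the linear span $\langle X\rangle=\P^r$ (with $r+1=\#S$), and without it several of your assertions fail. Once you only know that $gT_0g^{-1}$ and $T_0'$ have the same image in $G(X')$, you cannot conclude they agree as subgroups of $\PGL(N+1)$: the restriction map from $\{p\in\PGL(N+1):p(X')=X'\}$ to $G(X')$ has nontrivial kernel whenever $X'$ fails to span $\P^N$ (any $p$ fixing $\langle X'\rangle$ pointwise lies in it), so $gT_0g^{-1}$ and $T_0'$ may well be distinct lifts of the same torus in $G(X')$. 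Your next sentence then breaks down twice over: a $d$-dimensional torus with $d<N$ lies in infinitely many maximal tori of $\PGL(N+1)$, so there is no ``the'' maximal torus containing $T_0$, and $g$ has no reason to normalize any one of them. (Relatedly, the normalizer of a maximal torus in $\PGL(N+1)$ modulo the torus is just the symmetric group $S_{N+1}$; there is no further ``linear automorphism of the cocharacter lattice'' to appeal to.)

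The paper's remedy is precisely the step you omit. Linear independence of characters gives $\langle X(S)\rangle=\P^r$, and since $g\langle X\rangle=\langle X'\rangle$ one may replace $\P^N$ by $\P^r$. Now $X$ spans, so the only element of $\PGL(r+1)$ acting trivially on $X$ is the identity; hence $G(X)$ is literally a subgroup of $\PGL(r+1)$, the conjugation aligning the two tori genuinely takes place inside $\PGL(r+1)$, and the eigencharacter comparison becomes immediate with no ``larger eigencharacter set'' to keep track of. That single reduction also dissolves the obstacle you flag in your final sentence.
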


  \begin{proof}
    Let $\<X\>\subseteq\P^N$ be the smallest linear subspace
    containing $X$. Then the linear independence of characters implies
    that $\<X\>=\P^r\subseteq\P^N$ where $r+1=\#S$.

    By assumption there is $g\in\PGL(N+1)$ with $g(X)=X'$.  Since then
    $g\<X\>=\<X'\>$ we deduce that $\dim \<X\>=r=\dim \<X'\>$ and that $g$
    is an automorphism of $\P^r$. Thus we may replace $\P^N$ by $\P^r$
    and hence assume that $r=N$. Since, in this case, the only element
    of $\PGL(N+1)$ which acts as identity on $X$ is the identity, we
    see that $G(X)$ (and likewise $G(X')$) is a subgroup of
    $\PGL(N+1)$.

    Let $T_0$ and $T_0'$ be the image of $T=(\C^*)^d$ in $\PGL(N+1)$
    corresponding to $S$ and $S'$, respectively, via the map defined 
    in \eqref{eq:hom}. Then, by
    Lemma~\ref{lem:max}, both $T_0$ and $T_1:=g^{-1}T_0'g$ are maximal
    tori of $G(X)$. It follows that there is $h\in G(X)$ with
    $h^{-1}T_1h=T_0$. Replacing $g$ by $gh$ we may assume additionally
    to $g(X)=X'$ that $T_0=T_1$.

    Lemma~\ref{lem:dim} also implies that $T\to T_0$ and $T\to T_0'$ are
    isomorphisms. Thus we get $\phi\in\Aut T$ such that the following
    diagram commutes:
    \[
      \begin{matrix}
        T&\overset{\rho_S}\to&T_0\\
        \llap{$\phi$}\downarrow&&\downarrow\rlap{${\rm Ad}(g)$}\\
        T&\overset{\rho_{S'}}\to&T_0'
      \end{matrix}
    \]
    Write $\Phi$ for the automorphism of $\Z^d$ corresponding to $\phi$, so that using $\rho_S\circ\phi^{-1}=\rho_{\Phi(S)}$ we get
    \[
      \rho_{S'}(t)=g\rho_{\Phi(S)}g^{-1}
    \]
    This means that the sets of eigencharacters $S'$ and $\Phi(S)$ are equal up to a
    translation, i.e., there is $b\in\Z^d$ such that $S'=\Phi(S)+b$.
\end{proof}



  
  
We conclude the paper with an example of a solid but not $\Z$-solid
lattice polytope.

\begin{Exm}\label{exm:solid}

  Consider the lattice $M=\Z^d+\Z\delta\subseteq\R^d$ where
  $\delta=(\frac12,\ldots,\frac12)$. Then the simplex
  $P:=\conv\{0,e_1,\ldots,e_d\}$ is a lattice polytope. For $d\ge3$ we
  have $\delta\not\in P$. Hence $P$ is not $\Z$-solid for $d\ge3$. For
  $d\le2$ one can show that all solid lattice polytopes are
  $\Z$-solid.
  
\end{Exm}


\end{document}